\documentclass[12pt]{amsart}
\usepackage{enumerate, amsmath, amsthm, amsfonts, amssymb, mathrsfs, graphicx, paralist}
\usepackage[all]{xy}
\usepackage[usenames, dvipsnames]{color}
\usepackage[margin=1in]{geometry} 
\usepackage[bookmarks, bookmarksdepth=2, colorlinks=true, linkcolor=blue, citecolor=blue, urlcolor=blue]{hyperref}
\usepackage{eucal}
\usepackage{verbatim}
\usepackage{tikz}
\usetikzlibrary{automata,positioning}
\setcounter{tocdepth}{1}

\numberwithin{equation}{section}
\newtheorem{theorem}[equation]{Theorem}

\newtheorem{proposition}[equation]{Proposition}
\newtheorem{lemma}[equation]{Lemma}
\newtheorem{corollary}[equation]{Corollary}

\theoremstyle{definition}
\newtheorem{rmk}[equation]{Remark}
\newenvironment{remark}[1][]{\begin{rmk}[#1] \pushQED{\qed}}{\popQED \end{rmk}}
\newtheorem{eg}[equation]{Example}
\newenvironment{example}[1][]{\begin{eg}[#1] \pushQED{\qed}}{\popQED \end{eg}}
\newtheorem{defn}[equation]{Definition}

\newcommand{\arxiv}[1]{\href{http://arxiv.org/abs/#1}{{\tt arXiv:#1}}}

\newcommand{\bm}{\mathbf{m}}
\newcommand{\cI}{\mathcal{I}}
\newcommand{\cL}{\mathcal{L}}
\newcommand{\cM}{\mathcal{M}}
\newcommand{\cA}{\mathcal{A}}
\newcommand{\bN}{\mathbf{N}}
\newcommand{\bw}{\mathbf{w}}
\newcommand{\BF}[1]{\mathbf #1}

\DeclareMathOperator{\Aut}{Aut}
\DeclareMathOperator{\len}{len}
\DeclareMathOperator{\std}{std}
\DeclareMathOperator{\Inc}{Inc}
\DeclareMathOperator{\ini}{in}
\DeclareMathOperator{\Id}{Id}

\title[Hilbert series via formal languages]{Hilbert series of symmetric ideals\\ 
in infinite polynomial rings\\ 
via formal languages}
\date{\today}

\author{Robert Krone}
\address{Department of Mathematics and Statistics, Queen's University, Kingston, ON}
\email{\href{mailto:rk71@queensu.ca}{rk71@queensu.ca}}
\urladdr{\url{http://rckr.one/}}

\author{Anton Leykin}
\address{School of Mathematics, Georgia Institute of Technology, Atlanta, GA}
\email{\href{mailto:leykin@math.gatech.edu}{leykin@math.gatech.edu}}
\urladdr{\url{http://people.math.gatech.edu/~aleykin3/}}

\thanks{AL was supported by NSF grant DMS-1151297}

\author{Andrew Snowden}
\address{Department of Mathematics, University of Michigan, Ann Arbor, MI}
\email{\href{mailto:asnowden@umich.edu}{asnowden@umich.edu}}
\urladdr{\url{http://www-personal.umich.edu/~asnowden/}}

\thanks{AS was supported by NSF grants DMS-1303082 and DMS-1453893}
\begin{document}

\begin{abstract}
Let $R$ be the polynomial ring $K[x_{i,j}]$ where $1 \le i \le r$ and $j \in \bN$, and let $I$ be an ideal of $R$ stable under the natural action of the infinite symmetric group $S_{\infty}$. Nagel--R\"omer recently defined a Hilbert series $H_I(s,t)$ of $I$ and proved that it is rational. We give a much shorter proof of this theorem using tools from the theory of formal languages and a simple algorithm that computes the series.
\end{abstract}

\maketitle
\tableofcontents

\section{Introduction}

\subsection{Statement of results}

Let $R$ be the polynomial ring over the field $K$ in variables $x_{i,j}$, where $i \in \{1,\ldots,r\}$ and $j \in \bN$. The infinite symmetric group $S_{\infty}$ acts on $R$ (by fixing the first index and moving the second), and a fundamental result, proved originally by Cohen \cite{Cohen} but subsequently rediscovered \cite{AschenbrennerHillar,HillarSullivant}, is that $R$ is $S_{\infty}$-noetherian: that is, any $S_{\infty}$-ideal in $R$ is generated by the $S_{\infty}$-oribts of finitely many elements. Given this, one can begin to study finer properties of ideals. In this paper, we investigate their Hilbert series.

Let $I \subset R$ be a homogeneous $S_{\infty}$-ideal. For $n \ge 1$, let $R_n \subset R$ be the subalgebra generated by the variables $x_{i,j}$ with $1 \le i \le r$ and $j \le n$, and put $I_n = I \cap R_n$. Then $I_n$ is a finitely generated graded $R_n$-module, and so its Hilbert series $H_{I_n}(t)$ is a well-defined rational function. We define the Hilbert series of $I$ by
\begin{displaymath}
H_I(s,t) = \sum_{n \ge 0} H_{I_n}(t) s^n.
\end{displaymath}
This series was introduced by Nagel--R\"omer \cite{NagelRoemer}, who proved the following theorem:

\begin{theorem} \label{mainthm}
The series $H_I(s,t)$ is a rational function of $s$ and $t$.
\end{theorem}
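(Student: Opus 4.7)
My plan is to encode monomials of $R$ as words over the infinite alphabet $\Sigma = \bN^r$, show that the set of words corresponding to monomials in $I$ is recognized by a finite automaton whose transitions depend on each letter $a \in \Sigma$ only through a finite-valued invariant, and then apply standard weighted-automaton machinery to conclude rationality.

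First I will replace $I$ by its initial ideal for an $S_\infty$-invariant term order, which preserves the Hilbert series and reduces to the case that $I$ is a monomial $S_\infty$-ideal generated by the orbits of finitely many monomials $f_1,\ldots,f_k$. To each monomial $m = \prod x_{i,j}^{a_{i,j}}$ I associate its \emph{column word} $w(m) = c_1 c_2 \cdots \in \Sigma^*$ with $c_j = (a_{1,j},\ldots,a_{r,j})$, allowing arbitrary trailing zero letters. Writing $|w|$ for length and $\deg(w) = \sum_j |c_j|$, a direct rearrangement of the defining sum gives
\[
H_I(s,t) = \sum_{w \in \cL(I)} s^{|w|} t^{\deg(w)}, \qquad \cL(I) = \{w \in \Sigma^* : m(w) \in I\}.
\]

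Next, a monomial $m$ lies in $I$ iff for some $i$ there is an injection from the (multiset of) columns of $f_i$ into the columns of $m$ sending each source column to a componentwise dominating target. I encode this matching condition by a nondeterministic automaton $\cA$ whose states record, for each $i$, which columns of $f_i$ have already been matched; on reading $a \in \Sigma$ the automaton may either stay put or match one further column $c \le a$. Let $C \subset \bN^r$ be the finite set of columns appearing in the $f_i$; the transitions of $\cA$ depend on $a$ only through $\tau(a) = \{c \in C : c \le a\}$, so $\cA$ may be viewed as an NFA over the finite alphabet of types. Determinizing yields a DFA $D$ with finite state set $Q$ over the same alphabet.

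Finally I compute the weighted sum. For each type $\kappa$ set $g_\kappa(t) = \sum_{a \in \Sigma : \tau(a) = \kappa} t^{|a|}$. The set $\{\tau(a) = \kappa\}$ is a boolean combination of orthants $\{a \ge c\}$ with $c \in C$, and since $\sum_{a \ge c} t^{|a|} = t^{|c|}/(1-t)^r$, inclusion--exclusion shows $g_\kappa(t) \in K(t)$. Form the $Q \times Q$ transfer matrix $M(t)$ with $M(t)_{p,q} = \sum_{\kappa : \delta(p,\kappa) = q} g_\kappa(t)$. The Kleene--Sch\"utzenberger transfer-matrix formula then gives
\[
H_I(s,t) = \mathbf{e}_{q_0}^\top (\Id_Q - sM(t))^{-1} \mathbf{f},
\]
where $\mathbf{f}$ is the characteristic vector of accepting states of $D$, manifestly a rational function of $s$ and $t$. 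The main obstacle will be the third paragraph: constructing the matching automaton $\cA$ with finitely many states, checking that its transitions factor through $\tau$, and determinizing cleanly over the finite type alphabet. The reduction to monomial ideals, the inclusion--exclusion for each $g_\kappa$, and the transfer-matrix identity are all routine once this formal-language setup is in place.
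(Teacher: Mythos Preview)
Your overall strategy---encode monomials as column words, recognize the ideal by an automaton whose transitions depend only on a finite ``type'', and sum via a transfer matrix with rational type weights $g_\kappa(t)$---is sound and does yield rationality. It is a genuine variant of the paper's proof: the paper encodes each column in \emph{unary} over the finite alphabet $\{\tau,\xi_1,\ldots,\xi_r\}$, so that standard regular-language theory applies directly with no further work, whereas you keep the infinite column alphabet $\bN^r$ and compensate with the type partition and the inclusion--exclusion computation of $g_\kappa$. Your route makes the column structure more transparent at the cost of that extra step; the paper's route avoids the step but must introduce the ``standard word'' normal form to get a bijection.

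There is, however, a real gap in your reduction and membership test, all stemming from conflating $S_\infty$ with the increasing monoid $\Inc(\bN)$. First, there is no $S_\infty$-invariant term order on $R$: since $S_\infty$ acts transitively on $\{x_{i,j}:j\in\bN\}$ for fixed $i$, no total order on monomials can be preserved. What exists (and what the paper uses) is an $\Inc(\bN)$-\emph{compatible} order, and for such an order $\ini(I)$ is only guaranteed to be $\Inc(\bN)$-stable, not $S_\infty$-stable. Second, and more seriously, your criterion ``there is an injection from the columns of $f_i$ into the columns of $m$ with componentwise domination'' characterizes membership in the $S_\infty$-ideal generated by the $f_i$, not in the $\Inc(\bN)$-ideal you actually have after passing to $\ini(I)$. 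These differ: with $r=1$, the $\Inc(\bN)$-ideal $J$ generated by $f_1=x_{1,1}$ is $\langle x_{1,j}:j\ge 1\rangle$, and $m=x_{1,0}\notin J$; yet the single nonzero column of $f_1$ injects (non-increasingly) into the single nonzero column of $m$ with domination. So as written your NFA recognizes the wrong language and would compute the wrong Hilbert series.

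The repair is local: replace ``$S_\infty$-invariant term order'' by ``$\Inc(\bN)$-compatible term order'', replace ``injection'' by ``order-preserving injection'', and have the NFA for each $f_i$ record only the \emph{prefix} of columns matched so far (on reading $a$, optionally advance to the next column of $f_i$ if it is $\le a$). With that change the transitions still factor through your type map, the state set is still finite, and the rest of your argument goes through unchanged.
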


The purpose of this paper is to give a new proof of this theorem. Our proof is shorter and (in our opinion) conceptually clearer than the one given in \cite{NagelRoemer}.

\begin{remark}
In fact, \cite{NagelRoemer} work with what we would call $H_{R/I}(s,t)$, but it is a trivial matter to pass between this and our $H_I(s,t)$.
\end{remark}

\begin{remark}
The result of \cite{NagelRoemer} gives information about the denominator of $H_I(s,t)$. Our method gives some information as well, though we have not carefully traced through everything to see exactly what it yields. In particular, we do not know which method will ultimately say more about the denominator.
\end{remark}

\subsection{Overview of proof}

We now describe the idea of our proof. First, passing to the initial ideal one can reduce to the case where $I$ is a monomial ideal. One then has what is essentially a complicated bookkeeping problem: one must understand which of the monomials in the infinitely many variables $x_{i,j}$ appear in $I$. Our main idea is to use a sort of encoding scheme to make the problem more finite: more precisely, we establish a bijection between the monomials in $R$ and a certain set of words in a finite alphabet. Thus, in a sense, we trade the infinitely many commuting variables of $R$ for finitely many non-commuting variables. We show that, under this encoding scheme, $I$ (or rather, the set of monomials it contains) corresponds to a regular language. The theorem then follows from standard results on generating functions of regular languages.

The idea of using formal languages was motivated by the approach to Hilbert series in \cite{catgb}. However, the result and methods of this paper do not appear to fit into the general setup of \cite{catgb}.

\subsection{Outline}

In \S \ref{s:background} we review background material on regular languages. In \S \ref{s:monomial} we prove the main theorem in the case of monomial ideals; this is really the bulk of the work. In \S \ref{s:general} we complete the proof of the theorem by reducing to the monomial case. In \S \ref{s:algorithm} we explicitly describe an algorithm for computing $H_I(s,t)$, given a set of generators for $I$. Finally, in \S \ref{s:module} we discuss the possibility of treating Hilbert series of $R$-modules.

\subsection{Notation}

We write $\bN$ for the set of non-negative integers. We let $\Inc(\bN)$ be the so-called increasing monoid: this is the set of functions $f \colon \bN \to \bN$ satisfying $f(n)<f(m)$ for $n<m$, using composition as the monoidal operation. Throughout, $K$ denotes an arbitrary field.

\section{Background on regular languages} \label{s:background}

In this section we review some well-known material on formal languages, especially regular languages. We refer the reader to the text \cite{hopcroftullman} for more details.

Let $\Sigma$ be a finite set and let $\Sigma^{\star}$ be the set of words in the alphabet $\Sigma$; alternatively, $\Sigma^{\star}$ is the free monoid on $\Sigma$. A {\bf formal language} on $\Sigma$ is simply a subset of $\Sigma^{\star}$. Given a formal language $\cL$ on $\Sigma^{\star}$, we define the {\bf Kleene star} $\cL^{\star}$ of $\cL$ to be the language consisting of all words of the form $w_1 \cdots w_n$ where $w_i \in \cL$; alternatively, $\cL^{\star}$ is the submonoid of $\Sigma^{\star}$ generated by $\cL$. Given two formal languages $\cL_1$ and $\cL_2$, we define their {\bf concatenation} $\cL_1 \cL_2$ to be the formal language consisting of all words of the form $w_1 w_2$ with $w_1 \in \cL_1$ and $w_2 \in \cL_2$. We also make use of the standard set-theoretic operations of union, intersection, and complement on formal languages.

The class of {\bf regular languages} on $\Sigma$ is the smallest class of languages containing the singleton languages $\{\sigma\}$ for each $\sigma \in \Sigma$, and closed under union, concatenation, and Kleene star. (Actually, the empty language and the language consisting only of the empty word are also counted as regular languages, but do not fit the previous definition.) It turns out that the class of regular languages is also closed under intersection and complement.

Let $t_1, \ldots, t_k$ be a set of formal variables, let $\cM$ be the set of monomials in these variables, and let $\rho \colon \Sigma^{\star} \to \cM$ be a monoid homomorphism, which we refer to as the weight function. We note that $\rho$ is determined by its restriction to $\Sigma$. Given a language $\cL$ on $\Sigma$, we define its {\bf generating function} with respect to $\rho$ by
\begin{displaymath}
H_{\cL,\rho}(t_1, \ldots, t_k) = \sum_{w \in \cL} \rho(w),
\end{displaymath}
assuming this sum makes sense (i.e., there are only finitely many $w \in \cL$ for which $\rho(w)$ is a given monomial). We consider this as a formal power series in the variables $t_1, \ldots, t_k$. For example, suppose $k=1$ and $\rho$ is defined by $\rho(\sigma)=t$ for all $\sigma \in \Sigma$. Then for a word $w$ we have $\rho(w)=t^{\len(w)}$, and so the coefficient of $t^n$ in $H_{\cL,\rho}(t)$ is the number of words in $\cL$ of length $n$. We require the following standard result (see, e.g., \cite[Theorem~4.7.2]{stanley}, though the terminology there is somewhat different):

\begin{proposition}\label{prop:reggen}
If $\cL$ is a regular language then $H_{\cL,\rho}(t_1, \ldots, t_k)$ is a rational function of the $t_i$'s, for any weight function $\rho$ (for which the series makes sense).
\end{proposition}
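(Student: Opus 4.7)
The plan is to invoke the equivalence between regular languages and deterministic finite automata (DFAs), combined with the transfer matrix method. Let $M = (Q, \Sigma, \delta, q_0, F)$ be a DFA recognizing $\cL$, with $Q$ finite. For each $q \in Q$, let $\cL_q \subseteq \Sigma^{\star}$ denote the language of words $w$ with $\delta^{\star}(q, w) \in F$, so that $\cL_{q_0} = \cL$.

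Every non-empty word in $\cL_q$ has a unique first letter, giving the disjoint decomposition
\[
\cL_q \;=\; \varepsilon_q \;\sqcup\; \bigsqcup_{\sigma \in \Sigma} \{\sigma\} \cdot \cL_{\delta(q, \sigma)},
\]
where $\varepsilon_q$ is the singleton containing the empty word if $q \in F$ and is empty otherwise. Applying $\rho$ translates this into the linear system $(I - A) \mathbf{H} = \mathbf{b}$ in the vector of unknowns $\mathbf{H} = (H_{\cL_q, \rho})_{q \in Q}$, where the $(q, q')$-entry of $A$ is $\sum_{\sigma : \delta(q, \sigma) = q'} \rho(\sigma)$, and $\mathbf{b}_q = [q \in F]$. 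Each entry of $A$ is a polynomial in $t_1, \ldots, t_k$, so Cramer's rule presents each $H_{\cL_q, \rho}$, and in particular $H_{\cL, \rho}$, as a quotient of polynomials in $t_1, \ldots, t_k$.

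The one technical obstacle is invertibility of $I - A$ together with identification of the unique rational solution with the original generating function. When every $\rho(\sigma)$ is a non-constant monomial, the entries of $A$ all have vanishing constant term, so $I - A$ is a unit in the ring of matrices over $K[[t_1, \ldots, t_k]]$; uniqueness of the formal power series solution then identifies $H_{\cL, \rho}$ with the rational function produced by Cramer's rule. To handle the degenerate case in which some $\rho(\sigma) = 1$, one can introduce an auxiliary variable $u$ and replace $\rho$ by $\rho'(\sigma) = u \rho(\sigma)$; the new weighting is non-degenerate, the corresponding $H_{\cL, \rho'}(u, t_1, \ldots, t_k)$ is rational by the previous case, and the hypothesis that $H_{\cL, \rho}$ converges coefficient-wise justifies specializing $u = 1$ to obtain a rational function in the $t_i$'s alone. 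Everything else is a direct translation between the combinatorial decomposition and the algebraic identity, so this invertibility/specialization step is the only place requiring care.
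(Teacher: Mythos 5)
Your proof is correct and is essentially the standard transfer-matrix argument that the paper invokes by citing Stanley and then re-derives in matrix form as equation \eqref{eq:hs} in \S\ref{s:algorithm}: the first-letter decomposition of the $\cL_q$ is exactly the linear system whose solution is ${}^t\BF u(\Id-A)^{-1}\BF e_1$. The one loosely justified step is the specialization $u=1$ in the degenerate case, since the naive substitution into $P(u,t)/Q(u,t)$ can produce $0/0$ (e.g.\ when a dead state carries a weight-one self-loop); this is repaired either by cancelling the common power of $(1-u)$ using the identity $Q\cdot H=P$ coefficientwise in the $t_i$'s, or more cleanly by first discarding states that are not both accessible and co-accessible, after which the hypothesis that the series makes sense forces the weight-one transitions to be acyclic, so the corresponding part of $A$ is nilpotent and $\Id-A$ stays invertible at $u=1$.
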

\section{Monomial ideals} \label{s:monomial}

Let $R=K[x_{i,j}]$ where $1 \le i \le r$ and $j \in \bN$, and let $\cM$ be the set of monomials in $R$. Let $\Sigma$ be the alphabet $\{\tau, \xi_1, \ldots, \xi_r\}$. Let $T \colon \cM \to \cM$ be the shift operator, defined by $T(x_{i,j})=x_{i,j+1}$ and extended multiplicatively. We define a function $\bm \colon \Sigma^{\star} \to \cM$ inductively using the following three rules: (a) $\bm(\emptyset)=1$; (b) $\bm(\xi_i w)=x_{i,0} \cdot \bm(w)$; and (c) $\bm(\tau w)=T(\bm(w))$. Thus, concretely, to compute $\bm(w)$ simply change each $\xi_i$ in $w$ to $x_{i,0}$ and each $\tau$ to $T$ applied to the string following it.

\begin{example}
We have
$\bm(\tau \xi_1 \tau \xi_2 \tau) = T(x_{1,0}T(x_{2,0}T(1))) = T(x_{1,0} x_{2,1}) = x_{1,1} x_{2,2}.$
\end{example}

It is clear that the map $\bm \colon \Sigma^{\star} \to \cM$ is surjective, though it is not injective since the variables $x_{i,j}$ commute, e.g., $\bm(\xi_1 \xi_2)=\bm(\xi_2 \xi_1)$. We therefore introduce a subset of $\Sigma^{\star}$ to obtain a bijection. We say that a word $w$ in $\Sigma^{\star}$ is {\bf standard} if it satisfies the condition that every substring $\xi_i \xi_j$ of $w$ has $i \le j$. Let $\Sigma^{\star}_{\std}$ be the set of standard words, and let $\Sigma^{\star}_{\std,n}$ be the set of standard words in which $\tau$ occurs exactly $n$ times. Let $\cM_n$ be the set of monomials in the variables $x_{i,j}$ with $1 \le i \le r$ and $0 \le j \le n$.

\begin{proposition}
For each $n$ the map $\bm \colon \Sigma^{\star}_{\std,n} \to \cM_n$ is a bijection.
\end{proposition}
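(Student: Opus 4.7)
The plan is to describe an explicit inverse to $\bm$ using the block structure induced by the $\tau$'s. First I would note that any word $w \in \Sigma^{\star}$ with exactly $n$ occurrences of $\tau$ admits a unique decomposition
\begin{displaymath}
w = B_0 \, \tau \, B_1 \, \tau \, \cdots \, \tau \, B_n,
\end{displaymath}
where each $B_k$ is a (possibly empty) word over $\{\xi_1, \ldots, \xi_r\}$. A short induction on the length of $w$ using rules (a)--(c) of the definition of $\bm$ would yield the closed-form evaluation
\begin{displaymath}
\bm(w) = \prod_{k=0}^{n} T^k\!\bigl(\bm(B_k)\bigr).
\end{displaymath}
Since $\bm(B_k)$ is a product of variables $x_{i,0}$, each factor $T^k\bm(B_k)$ lies in the subring generated by $\{x_{i,k} : 1 \le i \le r\}$. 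Concretely, if $B_k = \xi_{i^{(k)}_1} \cdots \xi_{i^{(k)}_{a_k}}$, then the formula reads $\bm(w) = \prod_{k,\ell} x_{i^{(k)}_\ell,\, k}$, which in particular shows $\bm(w) \in \cM_n$.

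Next I would observe that standardness is a local condition involving only adjacent pairs of $\xi$'s, and since no $\tau$ separates letters within a single $B_k$, the word $w$ is standard if and only if each $B_k$ is non-decreasing in its subscripts. Therefore the elements of $\Sigma^{\star}_{\std,n}$ are in bijection with $(n+1)$-tuples of non-decreasing words in $\{\xi_1, \ldots, \xi_r\}^{\star}$, and such non-decreasing words are in turn in bijection with monomials in $\{x_{i,0} : 1 \le i \le r\}$: a monomial $\prod_i x_{i,0}^{e_i}$ corresponds to the unique sorted word $\xi_1^{e_1} \cdots \xi_r^{e_r}$.

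To construct the inverse explicitly, I would take $m \in \cM_n$, factor it uniquely as $m = m_0 m_1 \cdots m_n$ with $m_k$ a monomial in $\{x_{i,k} : 1 \le i \le r\}$, and use the bijection of the previous paragraph (translated by $T^k$) to produce a unique non-decreasing block $B_k$ from $m_k$. The word $w = B_0\, \tau\, B_1\, \tau\, \cdots\, \tau\, B_n$ is then standard, contains exactly $n$ occurrences of $\tau$, and satisfies $\bm(w) = m$ by the closed-form formula. Injectivity follows from the same formula: any standard preimage of $m$ must produce $m_k$ as the contribution of its $k$-th block, and standardness then forces the block to be the sorted word $B_k$. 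The only step requiring genuine work is the inductive verification of the closed-form formula for $\bm(w)$, which I do not expect to present any real difficulty.
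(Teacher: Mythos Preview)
Your proof is correct and rests on the same structural insight as the paper's: the $\tau$'s partition a word into blocks, with the $k$-th block contributing precisely the variables of second index $k$, and standardness reduces to each block being sorted. The packaging differs: the paper argues injectivity by induction on $n$, peeling off the segment before the first $\tau$ and invoking injectivity of $T$, and then declares surjectivity to be clear; you instead establish the closed formula $\bm(w)=\prod_k T^k(\bm(B_k))$ once and read off an explicit two-sided inverse. Your route is slightly more direct and makes surjectivity just as explicit as injectivity, while the paper's inductive argument avoids writing down the full product formula but at the cost of leaving surjectivity unproved. Neither approach has a real advantage here; both are short and the key observation is identical.
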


\begin{proof}
Let $u$ and $w$ be words in $\Sigma^{\star}_{\std,n}$ such that $\bm(u)=\bm(w)$, and let us prove $u=w$. Let $u'$ be the segment of $u$ appearing before the first $\tau$ in $u$, and write $u=u' u''$; similarly decompose $w=w' w''$. Note that $u'$, $u''$, $w'$, and $w''$ are all standard. Every variable in $\bm(u')$ has second index equal to~0, while every variable in $\bm(u'')$ has second index greater than~0, and similarly for $\bm(w')$ and $\bm(w'')$. We have
\begin{displaymath}
\bm(u') \bm(u'') = \bm(u) = \bm(w) = \bm(w') \bm(w'')
\end{displaymath}
and so $\bm(u')=\bm(w')$ and $\bm(u'')=\bm(w'')$. Since $u'$ and $w'$ are standard, it is clear that $u'=w'$.  If $n=0$ then $u''$ and $w''$ are empty and thus equal.  If $n > 0$ then $u'' = \tau u'''$ and $w'' = \tau w'''$ and $u''',w''' \in \Sigma^{\star}_{\std,n-1}$.  Since $T$ is injective on $\cM$, we have $\bm(u''')=\bm(w''')$.  By induction on $n$, $u''' = w'''$, thus $u=w$. We have thus shown that $\bm \colon \Sigma^{\star}_{\std} \to \cM$ is injective; it is clearly surjective.
\end{proof}

We let $\bw \colon \cM \to \Sigma^{\star}_{\std}$ be the right-inverse to the map $\bm$ which sends monomial $m$ to the minimal length word $w$ such that $\bm(w) = m$.  The image of $\bw$ is the set of words in $\Sigma^{\star}_{\std}$ that do not end in $\tau$.  On the other hand $\bm^{-1}(m) = w\tau^*$, the set of words consisting of $w$ followed by any number of trailing $\tau$s.

Given a monomial $m \in \cM$, let $\langle m \rangle$ be the set of monomials $m' \in \cM$ such that $\sigma(m) \mid m'$ for some $\sigma \in \Inc(\bN)$. Given monomials $m_1, \ldots, m_n$, let $\langle m_1, \ldots, m_n \rangle$ be the union of the $\langle m_i \rangle$'s.

\begin{proposition}\label{prop:Igfunc}
 Let $m_1,\ldots,m_n$ be monomials in $R$ and let $I$ the monomial ideal generated by the $\Inc(\bN)$-orbits of $m_1,\ldots,m_n$.  Let $\rho$ be the weight function defined by $\rho(\tau) = s$ and $\rho(\xi_i) = t$ for all $i = 1,\ldots,r$.  Then
 \[ H_I(s,t) = H_{\bm^{-1}(\langle m_1,\ldots,m_n\rangle),\rho}(s,t). \]
\end{proposition}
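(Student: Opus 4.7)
The plan is to compute both sides of the claimed equality as explicit sums indexed by the monomials of $\langle m_1, \ldots, m_n \rangle$, and to check that each monomial contributes the same term to each side.

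First I would unwind the left-hand side. Since $I$ is a monomial ideal, $I_\ell = I \cap R_\ell$ has a $K$-basis consisting of those monomials of $I$ that use only variables $x_{i,j}$ with $j \le \ell$, namely $\langle m_1, \ldots, m_n \rangle \cap \cM_\ell$. Writing $n_0(m)$ for the maximum second index appearing in $m$, the condition $m \in \cM_\ell$ becomes $\ell \ge n_0(m)$. Interchanging the order of summation in the definition $H_I(s,t) = \sum_\ell H_{I_\ell}(t)\,s^\ell$ then gives
\[
H_I(s,t) \;=\; \sum_{m \in \langle m_1, \ldots, m_n \rangle} t^{\deg m} \sum_{\ell \ge n_0(m)} s^\ell \;=\; \sum_{m \in \langle m_1, \ldots, m_n \rangle} \frac{s^{n_0(m)}\, t^{\deg m}}{1-s}.
\]

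Next I would unwind the right-hand side. Using the description $\bm^{-1}(m) = \bw(m)\,\tau^{\star}$ noted just before the proposition, I would partition the sum by monomial to obtain
\[
H_{\bm^{-1}(\langle m_1, \ldots, m_n \rangle),\rho}(s,t) \;=\; \sum_{m \in \langle m_1, \ldots, m_n \rangle} \sum_{k \ge 0} \rho\bigl(\bw(m)\,\tau^k\bigr) \;=\; \sum_{m} \frac{s^{\#\tau(\bw(m))}\, t^{\#\xi(\bw(m))}}{1-s}.
\]
Each $\xi_i$ occurring in a word contributes a single variable to its image under $\bm$, so $\#\xi(\bw(m)) = \deg m$. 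For the $\tau$-count, the preceding proposition supplies, for every $\ell \ge n_0(m)$, a unique standard word in $\Sigma^{\star}_{\std,\ell}$ mapping to $m$, and that word has length $\ell + \deg m$; the shortest such word, namely $\bw(m)$, therefore occurs at $\ell = n_0(m)$, so $\#\tau(\bw(m)) = n_0(m)$. Substituting these counts matches the two expressions term-by-term.

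The only step with any genuine content is the identification $\#\tau(\bw(m)) = n_0(m)$ in the last paragraph; this is immediate from the earlier bijection combined with the minimality defining $\bw(m)$, so I do not anticipate any real obstacle to carrying the argument out.
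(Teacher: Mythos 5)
Your argument is correct and is essentially the paper's proof reorganized: the paper compares the coefficient of each $s^\ell t^d$ on both sides using the bijection $\bm\colon \Sigma^{\star}_{\std,\ell}\to\cM_\ell$, whereas you fix a monomial $m$ and sum the geometric series over its fiber $\bw(m)\tau^{\star}$, which amounts to the same double counting. The key identification $\#\tau(\bw(m)) = n_0(m)$ that you isolate is exactly what the bijection proposition delivers, so there is no gap.
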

\begin{proof}
 Let $\cI \subset \cM$ be the set of monomials in $I$. Then $\cI=\langle m_1, \ldots, m_n \rangle$.  The coefficient of $s^n t^m$ in $H_I(s,t)$ is the number of monomials in $\cI \cap \cM_n$ of degree $m$. This equals the number of words in $\bm^{-1}(\cI)$ in which $\tau$ appears exactly $n$ times and which contain exactly $m$ non-$\tau$ letters. But this is just the coefficient of $s^n t^m$ in $H_{\bm^{-1}(\cI),\rho}(s,t)$ as defined at the end of \S \ref{s:background}. Thus $H_I(s,t)=H_{\bm^{-1}(\cI),\rho}(s,t)$, and so the result follows from Proposition~\ref{prop:reggen}.
\end{proof}

We say that a word in $\Sigma^{\star}$ is {\bf simple} if it contains no $\tau$.

\begin{proposition}
The set $\Sigma^{\star}_{\std}$ is a regular language on $\Sigma$.
\end{proposition}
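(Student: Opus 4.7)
The plan is to characterize $\Sigma^{\star}_{\std}$ by a finite forbidden-substring condition and then invoke the standard closure properties of the class of regular languages (union, concatenation, complement) that were recalled in \S\ref{s:background}.

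First I would observe that the definition of ``standard'' is purely local: $w \in \Sigma^{\star}_{\std}$ iff $w$ contains no contiguous substring $\xi_i \xi_j$ with $i>j$. Let
\begin{displaymath}
\cL_{\text{bad}} = \{ \xi_i \xi_j : 1 \le j < i \le r \} \subset \Sigma^{\star}.
\end{displaymath}
This is a finite set of length-two words, hence a regular language (it is a finite union of singletons $\{\xi_i \xi_j\}$, each itself a concatenation of two singleton languages).

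Next I would form the language $\Sigma^{\star} \cdot \cL_{\text{bad}} \cdot \Sigma^{\star}$ consisting of all words having some forbidden adjacent pair somewhere. Since $\Sigma^{\star}$ is regular (it is the Kleene star of the finite regular language $\Sigma$) and regular languages are closed under concatenation, this language is regular. Then
\begin{displaymath}
\Sigma^{\star}_{\std} = \Sigma^{\star} \setminus \bigl( \Sigma^{\star} \cdot \cL_{\text{bad}} \cdot \Sigma^{\star} \bigr),
\end{displaymath}
which is regular by closure under complement.

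There is no real obstacle here; the only thing to double-check is that ``substring'' in the definition of standard means contiguous substring, so that the forbidden patterns form a finite set. If one prefers a constructive proof, an alternative is to exhibit a DFA with states $\{q_0, q_1, \ldots, q_r, q_{\text{dead}}\}$, where $q_0$ is the initial state (also reached after reading any $\tau$), $q_i$ records that the last letter read was $\xi_i$, and the transition on $\xi_j$ from $q_i$ goes to $q_{\text{dead}}$ exactly when $i>j$; all of $q_0,\ldots,q_r$ are accepting. Either presentation gives the result.
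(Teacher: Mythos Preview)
Your argument is correct: the definition of ``standard'' is indeed a local forbidden-substring condition, so the complement of $\Sigma^{\star}_{\std}$ is $\Sigma^{\star} \cdot \cL_{\text{bad}} \cdot \Sigma^{\star}$ for a finite $\cL_{\text{bad}}$, and closure under complement finishes the job. The DFA you sketch is also fine.

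The paper takes a different, positive route. It first observes that the set $\cL$ of simple standard words (those with no $\tau$) is given by the explicit regular expression $\{\xi_1\}^{\star}\{\xi_2\}^{\star}\cdots\{\xi_r\}^{\star}$, and then writes $\Sigma^{\star}_{\std} = \cL \cdot (\tau\cL)^{\star}$. So the paper builds $\Sigma^{\star}_{\std}$ directly from singletons using only concatenation and Kleene star, whereas you build its complement and then invoke closure under complementation. The paper's version has the advantage of producing an explicit regular expression without passing through complementation (which in practice can blow up automaton size via determinization), and this fits more smoothly with the algorithmic point of view in the later section on computing the Hilbert series from a regular expression. Your version, on the other hand, makes transparent why the result holds---any finitely-forbidden-factor language is regular---and would generalize immediately to other local constraints. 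Either approach is perfectly adequate here.
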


\begin{proof}
Let $\cL$ be the language of simple standard words. The identity
\begin{displaymath}
\cL = \{ \xi_1 \}^{\star} \cdot \{ \xi_2 \}^{\star} \cdots \{ \xi_n \}^{\star}
\end{displaymath}
shows that $\cL$ is regular. The identity
$\Sigma^{\star}_{\std} = \cL \cdot (\tau \cL)^{\star}$
now shows that $\Sigma^{\star}_{\std}$ is regular.
\end{proof}

%

\begin{proposition} \label{prop:monreg}
Let $m \in \cM$. Then $\bm^{-1}(\langle m \rangle)$ is a regular language on $\Sigma$.
\end{proposition}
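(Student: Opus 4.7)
The plan is to construct a nondeterministic finite automaton (NFA) recognizing $\bm^{-1}(\langle m \rangle)$; since NFAs accept exactly the regular languages, this will suffice. I would work with $\bm^{-1}(\langle m \rangle)$ as a subset of all of $\Sigma^{\star}$, and if the standard-word version is desired, intersect with the regular language $\Sigma^{\star}_{\std}$ at the end.

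First I would reformulate membership combinatorially. Write $\bw(m) = W_0 \tau W_1 \tau \cdots \tau W_e$ with each $W_j$ a simple word, and decompose any input as $w = U_0 \tau U_1 \tau \cdots \tau U_f$ with each $U_k$ simple. Since $\bm(w) = \prod_k \prod_{\xi_i \in U_k} x_{i,k}$, the condition $\bm(w) \in \langle m \rangle$ is equivalent to the existence of indices $0 \le k_0 < k_1 < \cdots < k_e \le f$ such that each $W_j$ is contained in $U_{k_j}$ as a multiset of $\xi$-letters. One direction takes $k_j = \sigma(j)$ for the witnessing $\sigma \in \Inc(\bN)$; the other extends the assignment $j \mapsto k_j$ to a strictly increasing function on all of $\bN$, which is possible because $k_j \ge j$ is automatic from $k_0 < k_1 < \cdots$.

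I would then construct the NFA with state set $\{(j, P) : 0 \le j \le e,\ P \text{ a sub-multiset of } W_j\}$, starting at $(0, \emptyset)$ and accepting at $(e, W_e)$. Transitions from $(j, P)$ with $P \subsetneq W_j$: on $\xi_i$ the automaton either stays at $(j, P)$ (skip) or, if $P + \{\xi_i\} \le W_j$ as multisets, advances to $(j, P + \{\xi_i\})$ (use); on $\tau$ it transitions to $(j, \emptyset)$, discarding any partial progress within the current $U$-block. Transitions from $(j, W_j)$ with $j < e$: $\xi$-letters leave the state unchanged, and $\tau$ advances to $(j+1, \emptyset)$; the state $(e, W_e)$ is an accepting sink. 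The state space is evidently finite.

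Correctness is essentially bookkeeping: an accepting run encodes, through the $\tau$-transitions that advance $j$, a choice of indices $k_0 < \cdots < k_e$, and through the nondeterministic $\xi$-transitions, a choice of which letters of $U_{k_j}$ are used to cover $W_j$; the mandatory reset of $P$ at every $\tau$ forces the matching of each $W_j$ to lie within a single $U$-block, while the required $\tau$ between $(j, W_j)$ and $(j+1, \emptyset)$ enforces the strict monotonicity of the $k_j$. The main subtlety worth flagging is the careful handling of multiplicities---so $P$ must be tracked as a multiset rather than a set, and the membership test $P + \{\xi_i\} \le W_j$ is multiset containment---but nothing deeper is involved.
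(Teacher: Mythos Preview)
Your argument is correct and rests on the same combinatorial core as the paper's: both decompose $\bw(m)=W_0\tau W_1\tau\cdots\tau W_e$ into simple blocks and characterize $\bm(w)\in\langle m\rangle$ by the existence of indices $k_0<\cdots<k_e$ with each $W_j$ contained (as a multiset) in the $k_j$-th simple block of $w$. The difference is only in how regularity is certified. The paper writes down the regular expression
\[
\Sigma^{\star}\,\cL_0\,\Sigma^{\star}\tau\,\cL_1\,\Sigma^{\star}\tau\cdots\Sigma^{\star}\tau\,\cL_e\,\Sigma^{\star},
\]
where $\cL_j$ is the (regular) language of simple standard words whose monomial is divisible by $\bm(W_j)$, and then intersects with $\Sigma^{\star}_{\std}$; you instead build an explicit NFA that tracks the current target index $j$ and the sub-multiset of $W_j$ matched so far within the present block. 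Your automaton is essentially what one obtains by compiling the paper's expression, with the multiset component playing the role of an automaton for $\cL_j$. The regular-expression route is shorter to state and feeds directly into the algorithm of \S\ref{s:algorithm}; your NFA has the virtue of being self-contained and making the block-by-block matching completely explicit.
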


\begin{proof}
Write $\bw(m)=w_0 \tau w_1 \tau \cdots \tau w_n$, where each $w_i$ is simple. Let $\cL_i$ be the language consisting of simple standard words $w'$ such that $\bm(w_i) \mid \bm(w')$. One easily sees that $\cL_i$ is a regular language. Let $\cL$ be the regular language on $\Sigma$ defined by
\begin{equation}
\label{eq:regex}
\Sigma^{\star} \cL_0 \Sigma^{\star} \tau \cL_1 \Sigma^{\star} \tau \cL_2 \cdots \Sigma^{\star} \tau \cL_n \Sigma^\star.
\end{equation}
We claim that a monomial $m'$ belongs to $\langle m \rangle$ if and only if $\bw(m') \in \cL$. This will prove the proposition, as then $\bm^{-1}(\langle m \rangle)$ will coincide with $\cL \cap \Sigma^{\star}_{\std}$, and $\Sigma^{\star}_{\std}$ is also regular.

First suppose $m' \in \langle m \rangle$, so that $\sigma(m) \mid m'$ for some $\sigma \in \Inc(\bN)$. Write $m=m_0 \cdots m_n$ where $m_j$ uses only the variables $x_{i,j}$, and similarly write $m'=m'_0 \cdots m'_t$. Then $\sigma(m_j) \mid m'_{\sigma(j)}$ for $0 \le j \le n$. We have $\bw(m')=w_0' \tau w_1' \tau \cdots \tau w_t'$ where $\tau^j w_j'=\bw(m_j')$. We can regroup this expression as
\begin{displaymath}
\bw(m') = (\cdots) w'_{\sigma(0)} (\cdots) \tau w'_{\sigma(1)} (\cdots) \cdots (\cdots) \tau w'_{\sigma(n)} (\cdots)
\end{displaymath}
Since $\tau^{\sigma(j)} w_j = \bw(\sigma(m_j))$ and $\sigma(m_j) \mid m'_{\sigma(j)}$, we see that $\bm(w_j) \mid \bm(w'_{\sigma(j)})$ and so $w'_{\sigma(j)} \in \cL_j$. Thus the above expression shows that $\bw(m') \in \cL$.  Finally, if $\bw(m')$ is in $\cL$ then so is the set $\bw(m')\tau^{\star} = \bm^{-1}(m')$. 

Now suppose $w' \in \cL$. Write $\bm(w')=m_0' \cdots m'_t$ and $w'=w_0' \tau \cdots \tau w'_t\tau^k$ as above. Since $\bw(m') \in \cL$, we can find $\sigma(0)<\sigma(1)<\cdots<\sigma(n)$ such that $w'_{\sigma(j)} \in \cL_j$ for $0 \le j \le n$. Extend $\sigma$ arbitrarily to an element of $\Inc(\bN)$. Then it is clear that $\sigma(m) \mid \bm(w')$, and so $\bm(w') \in \langle m \rangle$.
\end{proof}

\begin{corollary}\label{cor:monreg}
Let $m_1, \ldots, m_n \in \cM$. Then $\bm^{-1}(\langle m_1, \ldots, m_n \rangle)$ is a regular language on $\Sigma$.
\end{corollary}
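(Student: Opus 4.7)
The plan is to deduce this immediately from Proposition~\ref{prop:monreg} together with the closure properties of regular languages established in Section~\ref{s:background}.

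First I would unwind the definition of $\langle m_1, \ldots, m_n \rangle$, which was given in the paper as the union $\bigcup_{i=1}^n \langle m_i \rangle$. Since preimages commute with unions, this yields
\[ \bm^{-1}(\langle m_1, \ldots, m_n \rangle) = \bigcup_{i=1}^n \bm^{-1}(\langle m_i \rangle). \]
By Proposition~\ref{prop:monreg}, each summand on the right is a regular language on $\Sigma$.

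Finally, I would invoke the fact, recorded in Section~\ref{s:background}, that the class of regular languages is closed under finite unions (this is immediate from the inductive definition, since union is one of the three basic operations). Applying this to the finite union above gives regularity of $\bm^{-1}(\langle m_1, \ldots, m_n \rangle)$, completing the proof. There is no real obstacle here — the work has already been done in Proposition~\ref{prop:monreg}, and this corollary is just the observation that finitely many orbits can be handled simultaneously by taking a union.
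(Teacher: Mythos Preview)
Your proposal is correct and matches the paper's approach exactly: the paper states the corollary without proof, treating it as an immediate consequence of Proposition~\ref{prop:monreg} via the identity $\bm^{-1}(\langle m_1,\ldots,m_n\rangle)=\bigcup_i \bm^{-1}(\langle m_i\rangle)$ and closure of regular languages under finite unions.
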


\begin{theorem} \label{thm:monomial}
Let $I \subset R$ be an $\Inc(\bN)$-stable monomial ideal. Then $H_I(s,t)$ is a rational function.
\end{theorem}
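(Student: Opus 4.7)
The proof should be essentially a packaging of the three preceding results. First I would invoke the $\Inc(\bN)$-noetherian property of $R$ (Cohen/Aschenbrenner--Hillar, cited in the introduction): since $I$ is an $\Inc(\bN)$-stable monomial ideal, it is generated as such by finitely many monomials $m_1,\ldots,m_n \in \cM$. Thus the set of monomials in $I$ coincides with $\langle m_1,\ldots,m_n \rangle$ in the notation of the previous section.

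Next I would apply Proposition~\ref{prop:Igfunc} to identify
\[
H_I(s,t) = H_{\bm^{-1}(\langle m_1,\ldots,m_n\rangle),\rho}(s,t),
\]
where $\rho$ sends $\tau \mapsto s$ and $\xi_i \mapsto t$. By Corollary~\ref{cor:monreg}, the preimage $\bm^{-1}(\langle m_1,\ldots,m_n \rangle)$ is a regular language on $\Sigma$. Finally, Proposition~\ref{prop:reggen} tells us that the generating function of a regular language (with respect to any weight function for which it is well-defined) is a rational function; applied here, this immediately gives rationality of $H_I(s,t)$ in $s$ and $t$.

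The only point requiring comment is that the weight function $\rho$ indeed gives a well-defined power series: for each fixed $(a,b) \in \bN^2$, only finitely many standard words $w$ with $a$ occurrences of $\tau$ and $b$ non-$\tau$ letters exist, so the coefficient of $s^a t^b$ in $H_{\bm^{-1}(\cI),\rho}$ is finite. This is automatic from the definition of $\Sigma^{\star}_{\std,n}$ and the fact that $\bm$ restricts to a bijection $\Sigma^{\star}_{\std,n} \to \cM_n$.

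There is essentially no obstacle here; all the work has been done in the preceding propositions. If anything, the subtle input is the $\Inc(\bN)$-noetherianity, which guarantees the finite list $m_1,\ldots,m_n$; without this one could not reduce to Corollary~\ref{cor:monreg}, which is only stated for finitely generated orbits. Once that is invoked, the theorem is a two-line composition of previous results.
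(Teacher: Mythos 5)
Your proposal is correct and follows exactly the same route as the paper's proof: invoke $\Inc(\bN)$-noetherianity to obtain finitely many monomial generators, then combine Proposition~\ref{prop:Igfunc}, Corollary~\ref{cor:monreg}, and Proposition~\ref{prop:reggen}. The extra remark on well-definedness of the weight function is a harmless (and reasonable) addition that the paper leaves implicit.
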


\begin{proof}
It is known (see \cite{Cohen} or \cite{HillarSullivant}) that $I$ is finitely generated up to the action of $\Inc(\bN)$: that is, there exist $m_1, \ldots, m_n \in I$, which can be taken to be monomials, such that $I$ is the ideal generated by the $\Inc(\bN)$-orbits of $m_1, \ldots, m_n$.  By Propositions \ref{prop:Igfunc} and \ref{prop:reggen}, $H_I(s,t)$ is rational if $\bm^{-1}(\langle m_1, \ldots, m_n \rangle)$ is a regular language, which is the result of Corollary \ref{cor:monreg}.
\end{proof}

\begin{remark}
 The above construction can be generalized from the total degree grading to arbitrary $\Inc(\bN)$-stable (multi-) grading.  An $\Inc(\bN)$-stable multi-grading, $\deg:\cM \to \mathbb{Z}^k$, is determined by the values of $\deg(x_{i,0})$ for $i = 1,\ldots,r$.  The series $H_I$ is then given by
  \[ H_I(s,t_1,\ldots,t_k) = H_{\Sigma^{\star}_{\std},\rho}(s,t_1,\ldots,t_k) \]
 for weight function $\rho$ with $\rho(\tau) = s$ and $\rho(\xi_i) = \deg(x_{i,0})$ for $i = 1,\ldots,r$.
\end{remark}

\section{General ideals} \label{s:general}

Let $R$ be as in the previous section. We define an order $\le$ on the monomials in $R$ as follows. First, we order the variables $x_{i,j}$ lexicographically by comparing the second index first: that is, $x_{i,j} < x_{k,\ell}$ if $j < \ell$ or $j=\ell$ and $i<k$. We then order monomials by lexicographically comparing their exponents. This is a well-ordering of the monomials and compatible with multiplication. We write $\ini(f)$ for the initial term of a non-zero element $f \in R$ and $\ini(I)$ for the initial ideal associated to an ideal $I \subset R$.

\begin{lemma}
We have $\ini(I) \cap R_n = \ini(I \cap R_n)$.
\end{lemma}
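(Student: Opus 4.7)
The plan is to establish both inclusions of the equality, with the key tool being that the monomial order described in the excerpt is an \emph{elimination order} separating $R_n$ from the variables outside it. The inclusion $\ini(I \cap R_n) \subseteq \ini(I) \cap R_n$ is essentially formal: for any nonzero $f \in I \cap R_n$ the leading term $\ini(f)$ lies in $\ini(I)$ by definition and in $R_n$ because every term of $f$ does, so the $R_n$-ideal generators of $\ini(I \cap R_n)$ already lie in the $R_n$-ideal $\ini(I) \cap R_n$.

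The substantive observation needed for the reverse inclusion is that, because the variables are ordered with second index taking precedence, every variable $x_{i,j}$ with $j > n$ exceeds every variable of $R_n$. Under the lexicographic order on exponent vectors this propagates up to monomials: any monomial $m'$ that uses some $x_{i,j}$ with $j > n$ has top variable strictly greater than that of any monomial lying in $R_n$, whence $m' > m$ for every $m \in R_n$. From this I would extract the elimination property: if $f \in R$ is nonzero with $\ini(f) \in R_n$, then every term of $f$ must lie in $R_n$, since otherwise some term would exceed $\ini(f)$, contradicting maximality. Hence $f$ itself lies in $R_n$.

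The remaining inclusion $\ini(I) \cap R_n \subseteq \ini(I \cap R_n)$ then follows quickly. Both sides are monomial ideals of $R_n$, so it suffices to take a monomial $m$ on the left and factor it as $m = m' \cdot \ini(f)$ with $f \in I$ and $m' \in R$ a monomial; membership of $m$ in $R_n$ forces both $m'$ and $\ini(f)$ into $R_n$, and then the elimination property forces $f \in R_n$, giving $f \in I \cap R_n$ and $\ini(f) \in \ini(I \cap R_n)$, whence $m = m' \cdot \ini(f) \in \ini(I \cap R_n)$. The only mildly delicate point is pinning down the lex convention so that the order is genuinely a well-order on the infinitely many variables and has the separation property used above; granted these, no further computation is required.
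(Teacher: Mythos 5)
Your proof is correct and follows essentially the same route as the paper: the easy inclusion is formal, and the reverse inclusion rests on the observation that the order eliminates the variables $x_{i,j}$ with $j>n$, so $\ini(f)\in R_n$ forces $f\in R_n$. You simply spell out the final reduction (factoring a monomial of $\ini(I)\cap R_n$ as $m'\cdot\ini(f)$) that the paper leaves implicit.
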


\begin{proof}
It is clear that $\ini(I \cap R_n) \subset \ini(I) \cap R_n$, so let us prove the reverse containment. The ideal $\ini(I) \cap R_n$ is monomial, so it suffices to show that if $f \in I$ and $\ini(f) \in R_n$ then $f \in R_n$. But this is clear from how we ordered the variables: indeed, if $\ini(f)=m \in R_n$ then no monomial appearing in $f$ can contain a variable of the form $x_{i,j}$ with $j>n$, for then that monomial would exceed $m$ in our ordering and contradict $m$ being the initial term, and so it follows that $f \in R_n$.
\end{proof}

\begin{lemma} \label{lem:hilbinit}
We have $H_I(s,t)=H_{\ini(I)}(s,t)$.
\end{lemma}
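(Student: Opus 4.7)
The plan is to reduce the identity to a classical fact from Gröbner basis theory applied grade by grade. By definition,
\[
H_I(s,t) = \sum_{n \ge 0} H_{I_n}(t)\, s^n, \qquad H_{\ini(I)}(s,t) = \sum_{n \ge 0} H_{\ini(I)_n}(t)\, s^n,
\]
so it suffices to show that $H_{I_n}(t) = H_{\ini(I)_n}(t)$ as power series in $t$, for every $n \ge 0$.

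The previous lemma already gives $\ini(I) \cap R_n = \ini(I \cap R_n)$, i.e.\ $\ini(I)_n = \ini(I_n)$. So the task becomes to show, for each fixed $n$, that $H_{I_n}(t) = H_{\ini(I_n)}(t)$ as graded $R_n$-modules (where $R_n$ is the ordinary polynomial ring in the finitely many variables $x_{i,j}$ with $j \le n$, and our chosen monomial order restricts to a valid term order on $R_n$). This is exactly the standard fact from Gröbner basis theory: for any homogeneous ideal $J$ in a polynomial ring with a fixed term order, $J$ and $\ini(J)$ have the same Hilbert function, because the monomials outside $\ini(J)$ descend to a $K$-basis of the quotient by $J$ (equivalently, $R_n/J$ and $R_n/\ini(J)$ share a Hilbert function, hence so do $J$ and $\ini(J)$).

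Applying this to $J = I_n \subset R_n$ yields $H_{I_n}(t) = H_{\ini(I_n)}(t) = H_{\ini(I)_n}(t)$. Summing over $n$ with weight $s^n$ gives $H_I(s,t) = H_{\ini(I)}(s,t)$, as required.

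There is essentially no obstacle here: all the work has already been done by the previous lemma, which made the finite-variable slices of $\ini(I)$ agree with the initial ideals of the slices of $I$. The only thing to be careful about is verifying that our term order on $R$ does restrict to a well-defined monomial order on each $R_n$ (which is immediate from the definition) so that the classical Hilbert-series equality applies.
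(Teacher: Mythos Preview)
Your argument is correct and matches the paper's proof essentially step for step: compare coefficients of $s^n$, invoke the previous lemma to identify $\ini(I)\cap R_n$ with $\ini(I\cap R_n)$, and then apply the standard fact that passing to the initial ideal preserves the Hilbert series. The only difference is that you are slightly more explicit about why the standard fact applies (the order restricts to a term order on $R_n$), which is fine.
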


\begin{proof}
The coefficient of $s^n$ in $H_I(s,t)$ is equal to $H_{I \cap R_n}(t)$. It is a standard fact that passing to the initial ideal does not affect Hilbert series, and so this is equal to $H_{\ini(I \cap R_n)}(t)$. By the lemma, this is equal to $H_{\ini(I) \cap R_n}(t)$, which is the coefficient of $s^n$ in $H_{\ini(I)}(s,t)$.
\end{proof}

\begin{theorem}
Let $I$ be an $\Inc(\bN)$-stable ideal in $R$. Then $H_I(s,t)$ is a rational function.
\end{theorem}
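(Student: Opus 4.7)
The plan is to reduce the general case to the monomial case already handled by Theorem \ref{thm:monomial}, using the initial ideal construction set up in Lemma \ref{lem:hilbinit}. Explicitly, I would define $J = \ini(I)$ with respect to the monomial order introduced at the beginning of \S \ref{s:general}, apply Theorem \ref{thm:monomial} to conclude that $H_J(s,t)$ is rational, and then invoke Lemma \ref{lem:hilbinit} to conclude $H_I(s,t) = H_J(s,t)$ is rational.

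To make this work, the only substantive point I need to check is that $J = \ini(I)$ is itself an $\Inc(\bN)$-stable monomial ideal whenever $I$ is $\Inc(\bN)$-stable. Since $J$ is automatically a monomial ideal, the real content is stability under $\Inc(\bN)$. For this I would verify that the monomial order defined in \S \ref{s:general} is compatible with the $\Inc(\bN)$-action in the sense that $m_1 < m_2$ implies $\sigma(m_1) < \sigma(m_2)$ for every $\sigma \in \Inc(\bN)$. On variables this is immediate from the definition: if $x_{i,j} < x_{k,\ell}$ then either $j<\ell$, whence $\sigma(j) < \sigma(\ell)$ and thus $x_{i,\sigma(j)} < x_{k,\sigma(\ell)}$, or $j = \ell$ and $i < k$, in which case $\sigma(j) = \sigma(\ell)$ and again $x_{i,\sigma(j)} < x_{k,\sigma(\ell)}$. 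Propagating this up to monomials via the lex comparison of exponent vectors requires a short argument, noting that $\sigma$ acts by relabeling variables in an order-preserving way and that all variables outside the image of $\sigma$ have exponent zero in both $\sigma(m_1)$ and $\sigma(m_2)$, so they play no role in the lex comparison.

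Given this compatibility, $\ini(\sigma(f)) = \sigma(\ini(f))$ for all $f \in R$ and $\sigma \in \Inc(\bN)$, so $\Inc(\bN)$-stability of $I$ transfers to $J = \ini(I)$. Then Theorem \ref{thm:monomial} gives rationality of $H_J(s,t)$, and Lemma \ref{lem:hilbinit} gives $H_I(s,t) = H_J(s,t)$.

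I expect no significant obstacles: the reduction to the monomial case is the point of \S \ref{s:general}, and verifying that the chosen monomial order is $\Inc(\bN)$-compatible is a routine unwinding of the definitions. The only mildly delicate point is the lex-compatibility argument for monomials, which I would write out carefully by locating the minimal variable in the image of $\sigma$ at which $\sigma(m_1)$ and $\sigma(m_2)$ differ and identifying it with the image under $\sigma$ of the minimal variable at which $m_1$ and $m_2$ differ.
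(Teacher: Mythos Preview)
Your proposal is correct and matches the paper's proof exactly: reduce to the monomial case via Lemma~\ref{lem:hilbinit} and Theorem~\ref{thm:monomial}, after noting that the chosen monomial order is $\Inc(\bN)$-compatible so that $\ini(I)$ remains $\Inc(\bN)$-stable. The paper merely asserts the compatibility parenthetically, whereas you spell out the verification in more detail, but the argument is the same.
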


\begin{proof}
This follows from the previous lemma and Theorem~\ref{thm:monomial}. (Note that our monomial ordering is compatible with the action of $\Inc(\bN)$, and so $\ini(I)$ is still $\Inc(\bN)$-stable.)
\end{proof}

\section{An algorithm for Hilbert series}\label{s:algorithm}

We now describe an algorithm for computing $H_I(s,t)$ for an $\Inc(\bN)$-stable ideal $I$ as above. We first recall some additional background material. Suppose that $\cL$ is a regular language. Then there is a finite-state automaton $\cA$ that accepts precisely the words in $\cL$, see \cite[Ch.~2]{hopcroftullman}. Fix such an $\cA$, and suppose that it has $N$ states. For $\ell \in \Sigma$ let $M_{\cA,\ell}$ be the associated transition matrix for $\cA$. This is the 0-1, left-stochastic $N \times N$ matrix with~1 in entry $(i,j)$ if there is edge labeled by $\ell$ from state $j$ to state $i$. Let $\BF e_1 \in K^n$ be the basis vector for the initial state, and let $\BF u = \sum_{i \in \mathcal{F}} \BF e_i \in K^n$ be the sum of the basis vectors corresponding to the accept states $\mathcal{F}$. Then for a word $w=w_1 \cdots w_n$, we have
\begin{displaymath}
{}^t\BF u M_{\cA,w_n} \cdots M_{\cA,w_1} \BF e_1 = \begin{cases}
1 & \text{if $\cA$ accepts $w$} \\
0 & \text{if $\cA$ rejects $w$.}
\end{cases}
\end{displaymath}
Let $\rho \colon \Sigma^{\star} \to \cM$ be a weight function, where $\cM$ is the set of monomials in $t_1, \ldots, t_k$. Summing the above expression over all words, we find
\begin{equation}
\label{eq:hs}
\begin{aligned}
H_{\cL,\rho}(t_1,\ldots,t_k)
&= \sum_{w \in \cL} \rho(w)
= \sum_{n\geq 0} {}^t\BF u\bigg(\sum_{\ell \in \Sigma} \rho(\ell)M_{\cA,\ell}\bigg)^n\BF e_1 \\
&= {}^t\BF u\bigg(\Id - \sum_{\ell \in \Sigma} \rho(\ell)M_{\cA,\ell}\bigg)^{-1}\BF e_1.
\end{aligned}
\end{equation}
Thus the generating function for $\cL$ can be computed directly from the automaton $\cA$.

The following is our algorithm for computing $H_I(s,t)$, given as input a set of elements $f_1, \ldots, f_r$ of $I$ whose $\Inc(\bN)$-orbits generate $I$:
\begin{enumerate}
\item First compute the initial ideal of $I$. This can be done using standard equivariant Gr\"obner basis techniques. We suppose that $m_1, \ldots, m_s$ are monomials whose $\Inc(\bN)$ orbits generate the initial ideal.
\item Next construct a regular expression for the language $\cL=\bm^{-1}(\langle m_1, \ldots, m_s \rangle)$. We note that \eqref{eq:regex} is essentially a regular expression for $\bm^{-1}(\langle m \rangle)$ (and is obviously constructed algorithmically from $m$), and a regular expression for $\cL$ can be obtained by ``or-ing'' the regular expressions for the various $\bm^{-1}(\langle m_i \rangle)$.
\item From the regular expression for $\cL$, construct an automaton $\cA$ that accepts $\cL$. It is well-understood how to algorithmically pass from a regular expression to an automaton, see \cite[Ch.~2]{hopcroftullman}.
\item Finally, compute the Hilbert series from the automaton via \eqref{eq:hs}, using the weight function from Proposition~\ref{prop:Igfunc}. This really computes the Hilbert series of the initial ideal, but this coincides with the Hilbert series of the original ideal $I$ by Lemma~\ref{lem:hilbinit}.
\end{enumerate}

\begin{example}
Let $r = 1$ and $I = \langle x_{1,0}^2 \rangle$. The language $\bm^{-1}(I)$ is detected by the regular expression
\begin{displaymath}
(\xi_1 \vert \tau)^{\star} \xi_1 \xi_1 (\xi_1 \vert \tau)^{\star}
\end{displaymath}
and by the automaton
\begin{center}
\begin{tikzpicture}[shorten >=1pt,node distance=2cm,on grid,auto] 
   \node[state,initial] (q_0)   {$1$}; 
   \node[state] (q_1) [right=of q_0] {$2$}; 
   \node[state,accepting] (q_2) [right=of q_1] {$3$}; 
    \path[->] 
    (q_0) edge  node {$\xi_1$} (q_1)
          edge [loop above] node {$\tau$} ()
    (q_1) edge  node {$\xi_1$} (q_2)
          edge [bend left]  node {$\tau$} (q_0)
    (q_2) edge [loop above] node {$\xi_1,\tau$} ();
\end{tikzpicture}
\end{center}
where the first two states are rejecting and the last accepting. The automaton has transition matrices
\begin{displaymath}
M_{\cA,\tau} = \begin{bmatrix}1&1&0\\0&0&0\\0&0&1\end{bmatrix}, \qquad
M_{\cA,\xi_1}    = \begin{bmatrix}0&0&0\\1&0&0\\0&1&1\end{bmatrix}.
\end{displaymath}
We have $\BF e_1 = (1,0,0)$ and $\BF u = (0,0,1)$, and so
\begin{displaymath}
H_I(s,t) = {}^t\BF u(\Id - sM_{\cA,\tau} - tM_{\cA,\xi_1})^{-1}\BF e_1 = \frac{t^2}{(1-s-t)(1-s-st)}. \qedhere
\end{displaymath}
\end{example}

We implemented functions constructing  automata corresponding to monomial ideals in $R$ and computing their Hilbert series in Macaulay2~\cite{wwwM2}. These along with some examples are posted at \url{http://rckr.one/eHilbert.html}.

\section{Hilbert series of modules} \label{s:module}

Let $M$ be a graded $R$-module equipped with a compatible action of $S_{\infty}$ that is generated by the $S_{\infty}$ orbits of finitely many elements.\footnote{For technical reasons related to our uses of \cite{infrank} below, we assume that every element of $M$ is stabilized by a subgroup of $S_{\infty}$ of the form $\Aut(\{n,n+1,\ldots\})$. This is automatic if $M$ is an ideal in $R$.} A natural problem is to define a notion of Hilbert series for $M$ and extend Theorem~\ref{mainthm} to this setting.

One can generalize the definition of $H_I$ as follows. Let $G(n) \subset S_{\infty}$ be the subgroup consisting of permutations that fix each of the elements $0, \ldots, n$. Then $I \cap R_n$ is identified with the invariants $I^{G(n)}$. Thus in the definition of $H_I$ we can simply replace $I \cap R_n$ with $M^{G(n)}$ to obtain a definition for $H_M$, i.e.:
\begin{displaymath}
H_M(s,t) = \sum_{n \ge 0} H_{M^{G(n)}}(t) s^n.
\end{displaymath}
This is a perfectly well-defined series, and so one can certainly study it and investigate its rationality properties. However, as a definition of Hilbert series it is fatally flawed: formation of $G(n)$ invariants is not exact, and so the above quantity is not additive in short exact sequences of $R$-modules. (For example, if $I=\langle x_{1,1}-x_{1,0} \rangle \subset R$ then $R^{G(-1)}$ is the set of constants where $G(-1)=S_{\infty}$.  Meanwhile $(R/I)^{G(-1)} = R/I \cong K[y]$, and therefore $(R/I)^{G(-1)} \ne R^{G(-1)}/I^{G(-1)}$.)

There are various ways one could try to fix this problem: one could substitute invariants with derived invariants, which is known to be well-behaved by \cite[\S 6.4.4]{infrank}, or with coinvariants, which is known to be exact by \cite[\S 6.2.11]{infrank}. However, the best series to study is probably
\begin{displaymath}
H_M = \sum_{n \ge 0} [M_n] t^n,
\end{displaymath}
where $[M_n]$ is the class of the $S_{\infty}$-representation $M_n$ in the Grothendieck group of finitely generated algebraic representations (in the sense of \cite[\S 6]{infrank}). Any reasonable notion of Hilbert series for $M$ should factor through the above definition. We note that the Grothendieck group in question is identified with the ring of symmetric functions $\Lambda$, so that above series can be considered as a power series in $t$ with coefficients in $\Lambda$. We believe there should be some sort of rationality theorem for $H_M$, but leave this as an open problem.

\end{document}